\date{}
\newtheorem{theorem}{Theorem}[section]
\newtheorem{proposition}[theorem]{Proposition}
\newtheorem{lemma}[theorem]{Lemma}
\newtheorem{corollary}[theorem]{Corollary}
\newtheorem{definition}[theorem]{Definition}
\theoremstyle{remark}
\newtheorem{remark}[theorem]{Remark}
\newcommand{\R}{\mathbb{R}}
\newcommand{\Rn}{\mathbb{R}^n}
\newcommand{\Bilip}{\mathrm{Bilip}}
\newcommand{\PL}{\mathrm{PL}}
\newcommand{\GL}{\mathrm{GL}}
\newcommand{\BS}{\mathrm{BS}}
\newcommand{\QI}{\mathrm{QI}}
\newcommand{\SO}{\mathrm{SO}}
\newcommand{\Diff}{\mathrm{Diff}}
\begin{document}
\title[Orderability and Asymptotic Structure of $\mathrm{QI}(\mathbb{R}^n)$]{Orderability and Asymptotic Structure of $\mathrm{QI}(\mathbb{R}^n)$}
\author{Swarup Bhowmik}
\address{Department of Mathematics, Indian Institute of Science Education and Research Bhopal, Bhopal Bypass Road, Bhauri, Bhopal 462 066, Madhya Pradesh,
India}
\email{swarupbhowmik@iiserb.ac.in}
\author{Deblina Das}
\address{Department of Mathematics, Indian Institute of Technology Palakkad}
\email{212114002@smail.iitpkd.ac.in, deblina099@gmail.com}
\author{Kashyap Rajeevsarathy}
\address{Department of Mathematics, Indian Institute of Science Education and Research Bhopal, Bhopal Bypass Road, Bhauri, Bhopal 462 066, Madhya Pradesh,
India}
\email{kashyap@iiserb.ac.in}
\begin{abstract}
In this article, we study the algebraic and dynamical structure of certain normal subgroups of the quasi-isometry group of Euclidean spaces. We first consider the normal subgroup consisting of quasi-isometries that are asymptotically equal to the identity, and introduce a nested family of normal subgroups that distinguish different orders of sublinear deviation from the identity. We show that the centers of the resulting quotient groups are trivial. We further prove that these quotient groups are neither left-orderable nor locally indicable. We also introduce an asymptotic topology on the quasi-isometry group, yielding a natural metric structure on the quotient and providing a framework for studying large-scale invariants.
\end{abstract}
\thanks{2020 \textit{Mathematics Subject Classification.} 20F60, 20F65, 20F69}
\thanks{\textit{Key words and phrases.} Quasi-isometry group, left-orderable groups.}
\maketitle
\section{Introduction}

The concept of quasi-isometry plays a central role in geometric group theory, serving as a large-scale invariant of metric spaces and finitely generated groups equipped with word metrics. For a metric space $X$, the group of quasi-isometries $\QI(X)$ is defined as the set of equivalence classes of self quasi-isometries under the relation of bounded distance. Although the study of $\QI(X)$ for various classes of spaces and groups has received considerable attention, including irreducible lattices in semisimple Lie groups \cite{Farb}, solvable Baumslag-Solitar groups $\BS(1, n)$ \cite[Theorem 7.1]{Farb_Mosher}, as well as the groups $\BS(m, n)$ where $1 < m < n$ \cite[Theorem 4.3]{Whyte}, the structure of $\QI(\Rn)$ remains only partially understood.

For $n=1$, Sankaran \cite{sankaran} provided an explicit description via the group $\PL_\delta(\R)$, the group of piecewise-linear homeomorphisms of the real line with bounded slopes, revealing a rich algebraic structure containing Thompson's group $F$ and free groups of continuum rank, with trivial center \cite{chakraborty}. Notably, Gromov and Pansu \cite[\textsection 3.3.B]{gromov} demonstrated that, for $n = 1$, $\QI(\mathbb{Z}) \cong \QI(\mathbb{R})$ emerges as an infinite-dimensional group. In \cite{ye2023group}, it was shown that the orientation-preserving quasi-isometry group $\QI^{+}(\R)$ is not simple, admitting a suitable normal subgroup and exhibiting intriguing dynamical properties: it is left-orderable but not locally indicable and has no effective action on $\R$. Later, \cite{bhowmik_chakraborty2} gave an almost complete characterization of $\QI(\R_{+})$, the quasi-isometry group of the positive real line, through a new invariant, revealing that a related quotient remains left-orderable yet not locally indicable.

In contrast, for $n>1$, the groups $\QI(\mathbb{R}^n)$ remain much less understood. Nevertheless, Mitra and Sankaran \cite{mitra_sankaran} showed that $\QI(\mathbb{R}^n)$ contains significant subgroups such as $\Bilip(S^{n-1})$, $\Diff^r(S^{n-1})$, and $\PL(S^{n-1})$. In \cite{bhowmik_chakraborty}, a combinatorial criterion was established for determining when a piecewise-linear homeomorphism of $\mathbb{R}^n$ represents an element of $\QI(\mathbb{R}^n)$, and it was further shown that its center is trivial.

In this article, we study algebraic and dynamical features of the quasi-isometry group $\QI(\Rn)$ from the perspective of large-scale geometry. Despite its fundamental role in geometric group theory, the internal structure of $\QI(\Rn)$, particularly its normal subgroups arising from asymptotic behavior, remains only partially understood. It was shown in \cite{zhaothesis} that $\QI(\Rn)$ is not simple through the existence of a proper normal subgroup consisting of quasi-isometries that are asymptotically close to the identity. However, this result represents only a first step in understanding the asymptotic geometry of $\QI(\Rn)$. In particular, the structure of intermediate normal subgroups and the behavior of central elements in the associated quotient groups have remained largely unexplored.

Motivated by this, we introduce a new family of normal subgroups that capture the asymptotic deviation of quasi-isometries from the identity and provide a framework for analyzing their algebraic and dynamical properties. It is already known from \cite{zhaothesis} that the quasi-isometry group $\QI(\Rn)$ admits a proper normal subgroup
\[H = \left\{[f] \in \QI(\mathbb{R}^n): \lim_{\|x\| \to \infty} \frac{
 \|f(x)-x\|
  }{\|x\|}=0 \right\}\]
  which shows that $\QI(\Rn)$ is not simple.
For each $\alpha\in (0,1)$, we define a family of subsets of $\QI(\Rn)$ as follows:
 $$H_{\alpha}=\{[f]\in \QI(\Rn): \exists~ K,R>0, \|f(x)-x\| \leq K \|x\|^{\alpha}~\text{for~all~} \|x\| \geq R \}.$$ Each \(H_\alpha\) is a normal subgroup of \(\QI(\mathbb{R}^n)\) and satisfies
\(H_\alpha \subset H_\beta \subset H\) whenever \(0<\alpha<\beta<1\).
Our main results are as follows.
\begin{theorem}\label{center_quotient}
The center of the group $\QI(\mathbb{R}^n)/H$ (and hence $\QI(\Rn)/H_\alpha$) is trivial.
\end{theorem}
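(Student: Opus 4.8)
The plan is to reformulate centrality as an \emph{asymptotic commutation} condition and then, for any $[g]\notin H$, to exhibit an explicit element with which $[g]$ fails to commute modulo $H$. First I would record the reformulation: for quasi-isometries $g,f$, the commutator $gfg^{-1}f^{-1}$ lies in $H$ if and only if $\|g(f(y))-f(g(y))\|=o(\|y\|)$ as $\|y\|\to\infty$. This follows by substituting $x=f(g(y))$ into the defining limit for $H$ and using that $f,g$ are quasi-isometries, so $\|fg(y)\|\asymp\|y\|$; the condition is clearly independent of the chosen representatives. Thus $[g]H$ is central precisely when $\|gf(y)-fg(y)\|=o(\|y\|)$ for \emph{every} quasi-isometry $f$.

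Now assume $[g]H$ is central but $[g]\notin H$, so there are $\varepsilon>0$ and $x_k=r_k u_k\to\infty$ (polar form, $r_k=\|x_k\|$, $u_k\in\mathbb{S}^{n-1}$) with $\|g(x_k)-x_k\|\ge\varepsilon r_k$. Writing $g(x_k)=s_k v_k$, the quasi-isometry property gives $s_k\asymp r_k$; passing to a subsequence I may assume $u_k\to u_*$, $v_k\to v_*$, $s_k/r_k\to\sigma$, and that the radii are sparse ($r_{k+1}\gg r_k$). Since $\|s_kv_k-r_ku_k\|\le|s_k-r_k|+s_k\|v_k-u_k\|$, the displacement would be $o(r_k)$ if both $u_*=v_*$ and $\sigma=1$; hence \emph{either} $u_*\neq v_*$ \emph{or} $\sigma\neq1$. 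The key device in both cases is a test map $f$ fixing $x_k$ \emph{exactly}, so that $gf(x_k)-fg(x_k)=g(x_k)-f(g(x_k))$ becomes directly computable. In the angular case $u_*\neq v_*$, take the cone extension $\Phi(x)=\|x\|\,\phi(x/\|x\|)$ of a map $\phi\in Bilip(\mathbb{S}^{n-1})$ (a quasi-isometry by Mitra--Sankaran) chosen to equal the identity near $u_*$ with $\phi(v_*)\neq v_*$; then $\Phi(x_k)=x_k$ for large $k$ and $gf(x_k)-fg(x_k)=s_k\bigl(v_k-\phi(v_k)\bigr)$ has norm $\gtrsim r_k$, contradicting asymptotic commutation. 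In the radial case $\sigma\neq1$, take a radial quasi-isometry $P(ru)=\rho(r)u$ with $\rho$ bi-Lipschitz, pinned so that $\rho(r_k)=r_k$ while $|\rho(s_k)-s_k|\ge c\,s_k$; since $r_k,s_k$ are separated by a fixed factor and the radii are sparse, such a $\rho$ exists, $P(x_k)=x_k$, and $gP(x_k)-Pg(x_k)=(s_k-\rho(s_k))v_k$ again has norm $\gtrsim r_k$. Either way we contradict $[g]H$ being central, so $[g]\in H$.

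For $QI(\Rn)/H_\alpha$, the inclusion $H_\alpha\subset H$ gives a surjection onto $QI(\Rn)/H$; since the image of a central element under a surjection is central, the triviality just established forces $Z(QI(\Rn)/H_\alpha)\subseteq H/H_\alpha$. It then remains to rule out $g\in H\setminus H_\alpha$. For such $g$ there is $x_k\to\infty$ with $d_k:=\|g(x_k)-x_k\|\gg\|x_k\|^\alpha$ (though possibly $d_k=o(r_k)$), while the reformulation now reads $\|gf(y)-fg(y)\|\le K\|y\|^\alpha$. I would run the identical scheme with \emph{localized} test maps scaled to $d_k$: a bi-Lipschitz ``swirl'' supported in a shrinking cap fixing $u_k$ and moving $v_k$ by $\gtrsim\|v_k-u_k\|$ when the angular part of the displacement dominates, or a radial bump fixing $r_k$ and displacing $s_k$ by $\gtrsim d_k$ when the radial part dominates, each with bi-Lipschitz constant uniform in $k$ (sparseness keeps the supports disjoint). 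In either case the commutator difference is $\gtrsim d_k\gg\|x_k\|^\alpha$, contradicting membership in $H_\alpha$.

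The main obstacle is analytic rather than algebraic. A general quasi-isometry has no well-defined boundary map on $\mathbb{S}^{n-1}$, so one cannot argue abstractly that some induced boundary transformation commutes with everything and is therefore trivial; the argument must proceed sequence-by-sequence, which is exactly why pinning $x_k$ precisely (via $\phi=\mathrm{id}$ near $u_*$, respectively $\rho(r_k)=r_k$) is indispensable. The genuinely delicate step is the explicit construction of bi-Lipschitz maps of $\mathbb{S}^{n-1}$ and of $(0,\infty)$ that fix one prescribed sequence while distorting another by a controlled amount, with bi-Lipschitz constants independent of $k$; in the $H_\alpha$ refinement these distortions must additionally be localized and sized to the sublinear quantity $d_k$, which is where the estimates are most demanding.
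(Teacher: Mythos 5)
Your overall strategy coincides with the paper's: centrality modulo $H$ is reformulated as the commutator estimate $\|gf(y)-fg(y)\|=o(\|y\|)$ for every quasi-isometry $f$ (this is exactly the paper's Lemma on when two classes agree mod $H$), and one then refutes centrality with a test map that fixes $x_k$ exactly while displacing $g(x_k)$ by a definite amount. For $n\ge 2$ your argument for $QI(\mathbb{R}^n)/H$ is sound: the dichotomy ``$u_*\neq v_*$ or $\sigma\neq 1$'' is correct, and both the cone extension and the pinned radial map do what you claim. Two caveats. First, for $n=1$ the angular case ($v_*=-u_*$) cannot be handled by your cone map, since no bijection of $\mathbb{S}^0$ fixes $u_*$ while moving $-u_*$; you need instead, e.g., a map dilating only the ray through $-u_*$. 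Second, your reduction for $QI(\mathbb{R}^n)/H_\alpha$ --- using the surjection $QI(\mathbb{R}^n)/H_\alpha\to QI(\mathbb{R}^n)/H$ to conclude $Z(QI(\mathbb{R}^n)/H_\alpha)\subseteq H/H_\alpha$, and then recognizing that classes in $H\setminus H_\alpha$ must be ruled out separately --- is actually more careful than the paper's own deduction, which only treats classes $[f]\notin H$ and never addresses $H\setminus H_\alpha$.

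However, your treatment of that remaining case ($g\in H\setminus H_\alpha$) has a genuine gap: all of your test maps respect the polar product structure (cone extensions of sphere maps, or radial maps $ru\mapsto\rho(r)u$), and these can be simultaneously powerless there. Concretely, fix $\alpha\in(0,1)$, let $u_k\to u_*$ with $\|u_k-u_*\|=r_k^{-(1-\alpha)/2}$, put $x_k=r_ku_k$ with $r_k$ sparse, and let $g$ be a ball-supported bump quasi-isometry with $g(x_k)=r_ku_*$. Then $d_k=\|g(x_k)-x_k\|=r_k^{(1+\alpha)/2}$, so $g\in H$ but $g\notin H_\alpha$, and the displacement is purely angular with $s_k=r_k$ and $v_k=u_*$ for all $k$. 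Any continuous sphere map $\phi$ with $\phi(u_k)=u_k$ for all $k$ must satisfy $\phi(u_*)=u_*$ by continuity, so it cannot move $v_k$ at all; and no radial $\rho$ can fix $r_k$ while displacing $s_k=r_k$. So both of your tools fail at once: the obstruction is that here $u_k$ and $v_k$ converge to a common limit, unlike your Case A where $u_*\neq v_*$ gave room to separate them. The repair is to localize in Euclidean rather than polar coordinates, which is precisely the paper's device: choose pairwise disjoint closed balls of radius $\sim d_k/3$ centered at $g(x_k)$ (these exclude $x_k$ since $\|g(x_k)-x_k\|=d_k$), and take the test map to be the identity outside the balls and, inside each, a rescaled copy of a fixed quasi-isometry of the closed unit ball moving the center by a quarter radius. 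Then $f(x_k)=x_k$ and $\|gf(x_k)-fg(x_k)\|\gtrsim d_k\gg\|x_k\|^\alpha$, the desired contradiction; the same ball construction with radius $\sim\epsilon\|x_k\|$ proves the $QI(\mathbb{R}^n)/H$ statement in all dimensions at once, with no angular/radial dichotomy and no separate $n=1$ argument.
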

\begin{theorem}\label{Center_H}
    For each $\alpha\in(0,1)$, the center of the group $H_\alpha$ (and hence $H$) is trivial.
\end{theorem}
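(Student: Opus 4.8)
The plan is to argue by contradiction: suppose $[g]\in Z(H_\alpha)$ but $[g]\neq[\mathrm{id}]$, and produce an $[h]\in H_\alpha$ with $[g][h]\neq[h][g]$. Recall that $[g]=[\mathrm{id}]$ exactly when $\sup_{x}\|g(x)-x\|<\infty$, so under our assumption there is a sequence $(x_k)$ with $d_k:=\|g(x_k)-x_k\|\to\infty$. Since a quasi-isometry sends bounded sets to bounded sets we must have $\|x_k\|\to\infty$, and because $[g]\in H_\alpha$ we also have $d_k\le K\|x_k\|^\alpha$ for all large $k$. Passing to a subsequence, I may assume the balls $B(x_k,2d_k)$ are pairwise disjoint.

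Next I would build $h$ as a disjoint union of bi-Lipschitz ``bumps'' localized near the $x_k$. Fix a cutoff $\phi\colon[0,\infty)\to[0,1]$ with $\phi\equiv1$ on $[0,\tfrac12]$, $\phi\equiv0$ on $[1,\infty)$ and $\mathrm{Lip}(\phi)\le2$, choose unit vectors $e_k$, set $w_k=\tfrac{d_k}{8}e_k$ and $r_k=\tfrac{d_k}{2}$, and define
\[
h(x)=x+\phi\!\left(\frac{\|x-x_k\|}{r_k}\right)w_k\quad\text{on }B(x_k,r_k),\qquad h(x)=x\ \text{ elsewhere.}
\]
The displacement field of the $k$-th bump has Lipschitz constant at most $\|w_k\|\,\mathrm{Lip}(\phi)/r_k=\tfrac12<1$, so each bump, hence $h$, is $(3/2)$-bi-Lipschitz with constants independent of $k$; in particular $h$ is a quasi-isometry. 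On the support of the $k$-th bump $\|x\|$ is comparable to $\|x_k\|$ and $\|h(x)-x\|\le\|w_k\|=\tfrac{d_k}{8}\le\tfrac{K}{8}\|x_k\|^\alpha$, while $h=\mathrm{id}$ off the supports; thus $\|h(x)-x\|\le K'\|x\|^\alpha$ and $[h]\in H_\alpha$. Since $\|w_k\|\to\infty$ we also have $[h]\neq[\mathrm{id}]$.

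Finally I would evaluate the commutator along $(x_k)$. Because $g(x_k)\in B(x_k,d_k)$ lies at distance $d_k>r_k$ from $x_k$ and the balls $B(x_k,2d_k)$ are disjoint, $g(x_k)$ avoids every support $B(x_j,r_j)$, whence $h(g(x_k))=g(x_k)$; on the other hand $h(x_k)=x_k+w_k$ since $\phi(0)=1$. Therefore, writing $g$ as an $(L,C)$-quasi-isometry,
\[
\|g(h(x_k))-h(g(x_k))\|=\|g(x_k+w_k)-g(x_k)\|\ge\tfrac1L\|w_k\|-C\longrightarrow\infty .
\]
Thus $g\circ h$ and $h\circ g$ are not at bounded distance, i.e. $[g][h]\neq[h][g]$, contradicting centrality of $[g]$; hence $Z(H_\alpha)$ is trivial. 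The identical construction, with the bound $d_k\le K\|x_k\|^\alpha$ replaced by $d_k=o(\|x_k\|)$, yields $h\in H$ when $[g]\in H$, giving triviality of $Z(H)$ as well.

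The step I expect to be the main obstacle is the middle one: verifying that the patched bump map is a bona fide quasi-isometry lying in $H_\alpha$. This forces the quantitative choice $\|w_k\|\ll r_k$ (to keep the uniform bi-Lipschitz estimate below $1$) and requires the disjointness of the supports together with their separation from the points $g(x_k)$ — which is exactly where the sublinear bound $d_k\le K\|x_k\|^\alpha$ and the sparsification of $(x_k)$ are used. Everything after this localization step is a direct application of the quasi-isometry lower bound for $g$.
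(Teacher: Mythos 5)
Your proof is correct and runs on essentially the same engine as the paper's: a sparse sequence of pairwise disjoint balls escaping to infinity, small bump perturbations supported on them that are tuned to lie in $H_\alpha$ (resp.\ $H$), and an evaluation of the commutator along that sequence showing $\sup_x\|g(h(x))-h(g(x))\|=\infty$, hence non-commutation modulo bounded distance. The only notable differences are of scope and placement: the paper proves the stronger statement that the centralizer of $H_\alpha$ in all of $QI(\mathbb{R}^n)$ is trivial — forcing it to cap the bump radii by $\sim\|b_m\|^{\alpha/2}$ and to center the bumps at the image points $f(b_m)$ of an arbitrary nontrivial $[f]$, so the commutator difference is literally the bump displacement — whereas you exploit that a central element of $H_\alpha$ already has displacement $O(\|x\|^\alpha)$, place the bumps at domain points $x_k$, and instead invoke the quasi-isometric lower bound for $g$ at the last step; both are valid, yours being slightly leaner but yielding only the center statement rather than the centralizer.
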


In recent years, the notions of left-orderability and locally indicability have attracted considerable attention (see \cite{Deroin_Navas_Rivas,Mann} for instance) due to their deep connections with algebraic,
dynamical, and topological structures. In this work, we examine these properties for the quotient
group \(\QI(\mathbb{R}^n)/H\) and the quotient group $\QI(\mathbb{R}^n)/H_\alpha$, obtaining the following result.
\begin{theorem}\label{left_orderable}
The quotient group \(\QI(\mathbb{R}^n)/H\) (and hence \(\QI(\mathbb{R}^n)/H_\alpha\))
is not left-orderable and therefore not locally indicable.
\end{theorem}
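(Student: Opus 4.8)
The plan is to exploit two standard facts: every left-orderable group is torsion-free, and (by Burns--Hale) every locally indicable group is left-orderable. Granting these, it suffices to produce a single nontrivial element of finite order in $QI(\mathbb{R}^n)/H$, since the existence of torsion immediately forbids left-orderability, and the failure of left-orderability forbids local indicability. So the whole theorem reduces to the construction and verification of one torsion element, and the same element will settle the $H_\alpha$ case because $H_\alpha \subset H$.

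First I would take a finite-order isometry of $\mathbb{R}^n$, the cleanest choice being the reflection $S(x_1,\dots,x_n) = (-x_1,x_2,\dots,x_n)$. Since $S$ is an isometry it is a $(1,0)$-quasi-isometry, so $[S] \in QI(\mathbb{R}^n)$, and $S^2 = \mathrm{id}$ forces $[S]^2 = e$ in $QI(\mathbb{R}^n)$. Passing to either quotient, the image $\overline{[S]}$ satisfies $\overline{[S]}^2 = \overline{e}$, so it is torsion of order dividing $2$; all that remains is to rule out that it is trivial.

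The key step, and the only place requiring genuine care, is verifying that $[S] \notin H$ (and hence $[S] \notin H_\alpha$). For this I would evaluate the defining quantity of $H$ along the $x_1$-axis: taking $x = (t,0,\dots,0)$ gives $\|S(x)-x\| = 2|t| = 2\|x\|$, so $\|S(x)-x\|/\|x\| = 2 \not\to 0$ as $\|x\|\to\infty$, which shows $[S] \notin H$. The same computation shows that the inequality $2\|x\| \le K\|x\|^\alpha$ cannot hold for large $\|x\|$ when $0<\alpha<1$, so $[S] \notin H_\alpha$ as well. Therefore $\overline{[S]}$ is a nontrivial involution in each quotient $QI(\mathbb{R}^n)/H$ and $QI(\mathbb{R}^n)/H_\alpha$.

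Combining the steps: each quotient contains a nontrivial torsion element, hence is not torsion-free, hence is not left-orderable; and since every locally indicable group is left-orderable, neither quotient is locally indicable. I do not expect a serious obstacle here, as the reflection makes the escape from $H$ completely transparent (the displacement is exactly proportional to $\|x\|$ in a fixed direction); the subtlety one must avoid is choosing a finite-order map whose non-membership in $H$ is less obvious, which is precisely why a reflection is preferable to a generic rotation.
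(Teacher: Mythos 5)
Your proof is correct and follows the same overall strategy as the paper: exhibit a nontrivial torsion element in the quotient, then invoke the facts that left-orderable groups are torsion-free and that locally indicable groups are left-orderable. The only real difference is the choice of torsion element. You use the hyperplane reflection $S(x_1,\dots,x_n)=(-x_1,x_2,\dots,x_n)$ uniformly in all dimensions, whereas the paper splits into cases: for $n=1$ it uses the reflection $x\mapsto -x$, and for $n\ge 2$ it uses block rotations $\widetilde{R}$ by angle $2\pi/k$, whose non-membership in $H$ requires a chord-length computation --- the displacement ratio $\|\widetilde{R}(x)-x\|/\|x\|$ has $\limsup$ equal to $2\sin(\pi/k)$ but $\liminf$ equal to $0$, so one must argue that the limit fails to vanish rather than that the ratio is bounded below everywhere. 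Your reflection makes this verification transparent (the displacement is exactly $2\|x\|$ along the $x_1$-axis), which is exactly the subtlety you flagged at the end. The trade-off is that the paper's rotations yield elements of every finite order $k\ge 2$ in $QI(\mathbb{R}^n)/H$ when $n\ge 2$, a slightly stronger structural conclusion, though unnecessary for the theorem itself. Your handling of the $H_\alpha$ case, via $H_\alpha\subset H$ together with the direct estimate $2\|x\|\not\le K\|x\|^\alpha$ for large $\|x\|$, matches the paper's corollary.
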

\noindent Furthermore, we observe that the quotient $\QI(\mathbb{R}^n)/H$ remains algebraically rich,
admitting faithful embeddings of several classical groups, including
$\GL(n,\mathbb{R})$ and $\Bilip(S^{n-1})$.

In addition, we introduce an asymptotic topology on $\QI(\mathbb{R}^n)$, defined in terms of relative deviation at infinity. We show that this topology admits a natural pseudo-metric yielding a Hausdorff metric structure on the quotient $\QI(\mathbb{R}^n)/H$. Moreover, we define a continuous invariant capturing asymptotic scaling behavior.

In Section \ref{Preliminaries}, we recall fundamental notions concerning the quasi-isometry group of $\mathbb{R}^n$ and the subgroups $H$ and $H_\alpha$. Section \ref{Properties} investigates the algebraic and dynamical properties of the quotient groups $\QI(\mathbb{R}^n)/H$ and $\QI(\mathbb{R}^n)/H_\alpha$, including the structure of their centers, the presence of torsion elements, and consequences for left-orderability. Section \ref{Discussion} introduces an asymptotic topology on $\QI(\mathbb{R}^n)$ and studies its relation to the subgroup $H$, yielding a natural metric structure on the quotient and associated large-scale invariants.
\section{Preliminaries}\label{Preliminaries}
In this section, we collect the basic notions and conventions used throughout the article. We begin with the definition of quasi–isometries, which is the key framework for our study. We then recall standard definitions from the theory of orderable groups, and introduce several pieces of notation used in later sections.
\subsection*{Quasi–isometries}

Let $(X,d_1)$ and $(X',d_2)$ be metric spaces. A map $f:X\to X'$ (not necessarily
continuous) is a \emph{quasi–isometric embedding} if there exist constants $K>1$ and
$C>0$ such that
$$\frac1K\, d_1(x_1,x_2)-C \;\le\; d_2\!\big(f(x_1),f(x_2)\big)
\;\le\; K\, d_1(x_1,x_2)+C$$
for all $x_1,x_2\in X$. If, in addition, every point of $X'$ lies within a uniformly bounded distance of $f(X)$, then $f$ is called a \emph{quasi–isometry}. Two maps $f,g:X\to X$ are \emph{quasi–isometrically equivalent} if there exists
$\alpha>0$ such that
$
d_1(f(x),g(x))<\alpha \text{ for all } x\in X.
$
The equivalence class of a quasi–isometry $f$ is denoted $[f]$.
The set $\QI(X)=\{[f]\mid f:X\to X \text{ is a quasi–isometry}\}$
forms a group with respect to composition: $[f]\circ[g]=[fg]$.
If $f:X\to X'$ is a quasi–isometry, then it induces an isomorphism $\QI(X)\cong \QI(X')$. For convenience, we sometimes write $f$ instead of $[f]$ when no confusion can arise. For a general introduction to quasi-isometries and their role in geometric group theory, we refer the reader to \cite{loh}. 
\subsection*{\texorpdfstring{The subgroups {\boldmath $H$} and {\boldmath $H_\alpha$}}{The subgroups H and H-alpha}}

Following \cite{zhaothesis}, we consider the normal subgroup
\[
H=\left\{
[f]\in \QI(\mathbb{R}^n):
\lim_{\|x\|\to\infty}\frac{\|f(x)-x\|}{\|x\|}=0
\right\},
\]
consisting of quasi–isometries asymptotically close to the identity.

For $0<\alpha<1$, we define
\[
H_\alpha=\Big\{
[f]\in \QI(\mathbb{R}^n):
\exists\,K,R>0 \text{ with }
\|f(x)-x\|\le K\|x\|^\alpha \text{ for all }\|x\|\ge R
\Big\}.
\]

These families of subgroups form the central objects of our algebraic and
dynamical analysis in later sections.
\subsection*{Centers and centralizers}
Let $G$ be a group. The center of $G$ and the centralizer of $g\in G$ are defined by
\[
Z(G)=\{g\in G : gh=hg \text{ for all } h\in G\}, \quad C_G(g)=\{h\in G : hg=gh\},
\]
respectively.
\subsection*{Left–orderable and locally indicable groups}
\begin{definition}
A group $G$ is \emph{left–orderable} if it admits a total order $\le$ such that
\[
g\le h \ \Longrightarrow\ fg\le fh \qquad\text{for all } f\in G.
\]
\end{definition}
Subgroups of left–orderable groups are also left–orderable. Typical examples include
torsion-free abelian groups, free groups, and the group of germs at $\infty$ of homeomorphisms of $\mathbb{R}$ \cite{Mann}.

An element $g\in G$ is a \emph{torsion element} if $g^m=e$ for some integer $m>1$.
Since left–orderable groups are torsion–free, the existence of torsion in the quotient
groups above is a key ingredient in Section~3.
\begin{definition}
A group $G$ is \emph{locally indicable} if every finitely generated nontrivial subgroup
admits a nontrivial homomorphism onto $\mathbb{Z}$.
\end{definition}

A classical theorem of Brodskii and Howie (see Deroin–Navas–Rivas~\cite{Deroin_Navas_Rivas}) asserts that torsion–free one–relator groups are locally indicable. Knot groups also lie in this class, as do the braid groups $B_3$ and $B_4$, while $B_n$ fails to be locally indicable for $n\ge5$.  
Local indicability implies left–orderability; this implication will be used later when
examining torsion in the quotient groups $\QI(\mathbb{R}^n)/H$ and $\QI(\mathbb{R}^n)/H_\alpha$.

\section[Algebraic and dynamical properties of quotient groups]{Algebraic and dynamical properties of quotient groups of \texorpdfstring{$\QI(\mathbb{R}^n)$}{\QI(Rn)}}\label{Properties}

In this section, we first show that the group \(\QI(\mathbb{R}^n)\) is not simple by providing a family of normal subgroups \(H_\alpha\) for each \(\alpha \in (0,1)\). We then establish that the centers of the quotient groups \(\QI(\mathbb{R}^n)/H\) and \(\QI(\mathbb{R}^n)/H_\alpha\) are trivial. Furthermore, we show that the centralizers of \(H_\alpha\) and \(H\) are also trivial. In addition, we examine the centralizers of certain elements of \(\QI(\mathbb{R}^n)\) and conclude the section with a discussion on the torsion-freeness and left-orderability of the corresponding quotient groups.
\begin{theorem}
     For each $\alpha\in (0,1)$, $H_{\alpha}$ is a nontrivial normal subgroup of $\QI(\Rn).$
\end{theorem}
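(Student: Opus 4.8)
The plan is to verify, in turn, that $H_\alpha$ is well defined on equivalence classes, that it is closed under the group operations, that it is stable under conjugation, and finally that it contains a nontrivial element. For well-definedness, suppose $g$ is quasi-isometrically equivalent to $f$, so $\|g(x)-f(x)\|\le M$ for some $M>0$. Then $\|g(x)-x\|\le\|f(x)-x\|+M\le K\|x\|^\alpha+M$, and since $\alpha>0$ forces $\|x\|^\alpha\to\infty$, the additive constant is absorbed: $M\le\|x\|^\alpha$ for all large $\|x\|$, giving $\|g(x)-x\|\le(K+1)\|x\|^\alpha$. The identity clearly lies in $H_\alpha$. This "absorption of additive constants via $\|x\|^\alpha\to\infty$" is the mechanism I would use repeatedly.

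For closure under composition, take $[f],[g]\in H_\alpha$ with $\|f(y)-y\|\le K_1\|y\|^\alpha$ and $\|g(x)-x\|\le K_2\|x\|^\alpha$ for large arguments, and write
\[
\|f(g(x))-x\|\le\|f(g(x))-g(x)\|+\|g(x)-x\|.
\]
I would bound the second term directly and the first by $K_1\|g(x)\|^\alpha$. The crucial input is that every quasi-isometry has at most linear growth: the upper bound applied with base point $0$ gives $\|g(x)\|\le K'\|x\|$ for large $\|x\|$, while the lower bound gives $\|g(x)\|\to\infty$. Together these yield $\|g(x)\|^\alpha\le(K')^\alpha\|x\|^\alpha$ and ensure the estimate for $f$ applies, so $\|f(g(x))-x\|\le C\|x\|^\alpha$. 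For inverses, if $[f]^{-1}=[\bar f]$ then $f\circ\bar f$ lies a bounded distance from the identity, $\|f(\bar f(x))-x\|\le M$; combining $\|\bar f(x)-x\|\le\|\bar f(x)-f(\bar f(x))\|+M\le K_1\|\bar f(x)\|^\alpha+M$ with the linear growth bound $\|\bar f(x)\|\le K'\|x\|$ again produces an estimate of the form $C\|x\|^\alpha$.

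For normality, let $[\phi]\in QI(\mathbb{R}^n)$ with quasi-inverse $[\psi]$ and $[f]\in H_\alpha$. Setting $y=\psi(x)$, I would estimate
\[
\|\phi(f(\psi(x)))-x\|\le\|\phi(f(y))-\phi(y)\|+\|\phi(\psi(x))-x\|,
\]
where the second term is bounded because $\phi\circ\psi$ is a bounded distance from the identity. For the first term, the upper quasi-isometry bound for $\phi$ gives $\|\phi(f(y))-\phi(y)\|\le K_\phi\|f(y)-y\|+C_\phi\le K_\phi K_1\|y\|^\alpha+C_\phi$, and the linear growth $\|y\|=\|\psi(x)\|\le K'\|x\|$ converts this into a bound of order $\|x\|^\alpha$. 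Absorbing the additive constants as before shows $[\phi f\psi]\in H_\alpha$.

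For non-triviality, I would exhibit the radial map $f(x)=\bigl(1+\|x\|^{\alpha-1}\bigr)x$ for $\|x\|\ge1$, extended by $f(x)=2x$ on the closed unit ball (the values match at $\|x\|=1$). Then $\|f(x)-x\|=\|x\|^\alpha\le\|x\|^\alpha$ for $\|x\|\ge1$, so $[f]\in H_\alpha$, and since $\|x\|^\alpha\to\infty$ the map is not a bounded distance from the identity, whence $[f]\neq[\mathrm{id}]$. The two conjoined obstacles I expect are: (i) the composition, inverse, and normality estimates, all of which hinge on using the at-most-linear growth of quasi-isometries to pass from $\|\cdot\|^\alpha$ of an image back to $\|x\|^\alpha$ while letting $\|x\|^\alpha\to\infty$ swallow every additive constant; and (ii) confirming that the explicit example is a bona fide quasi-isometry rather than merely a self-map with the correct displacement. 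For (ii) the radial profile $\rho(r)=r+r^\alpha$ has $\rho'(r)=1+\alpha r^{\alpha-1}\in(1,1+\alpha]$ for $r\ge1$, and the angular distortion factor $\rho(r)/r=1+r^{\alpha-1}$ lies in $(1,2]$ there, so the induced map is bi-Lipschitz on $\{\|x\|\ge1\}$ and hence a quasi-isometry; the hypothesis $\alpha<1$ is exactly what keeps both quantities bounded.
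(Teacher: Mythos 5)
Your proof is correct, and its group-theoretic half --- closure under composition and inversion, and normality --- follows essentially the same route as the paper: triangle inequalities combined with the at-most-linear growth of quasi-isometries to convert $\|g(x)\|^\alpha$ into $(K')^\alpha\|x\|^\alpha$, with all additive constants absorbed because $\|x\|^\alpha\to\infty$. In fact you are slightly more careful than the paper in two places: you check that membership in $H_\alpha$ does not depend on the chosen representative of an equivalence class, and you note explicitly that the coarse lower bound forces $\|g(x)\|\to\infty$, so that the displacement estimate for the outer map is eventually applicable. Where you genuinely diverge is the non-triviality witness. The paper takes $f(x)=x+A\ln(1+\|x\|)\,v$, whose logarithmic displacement places it in $H_\alpha$ for every $\alpha\in(0,1)$ simultaneously, and verifies the quasi-isometry property via the $1$-Lipschitz bound on $r\mapsto\ln(1+r)$. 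You instead take the radial stretch $f(x)=x+\|x\|^{\alpha-1}x$ for $\|x\|\ge 1$ (extended by $2x$ inside the unit ball), with displacement exactly $\|x\|^\alpha$, and verify bi-Lipschitzness through the radial and angular distortion factors $\rho'(r)\in(1,1+\alpha]$ and $\rho(r)/r\in(1,2]$; both verifications are sound. Your choice buys something the paper's does not: since $\|x\|^\alpha\not\le K\|x\|^\beta$ for large $\|x\|$ whenever $\beta<\alpha$, your witness lies in $H_\alpha\setminus H_\beta$, so it simultaneously shows that the inclusions $H_\beta\subseteq H_\alpha$ for $\beta<\alpha$ are strict, whereas the paper's logarithmic example lies in $\bigcap_{0<\alpha<1}H_\alpha$ and cannot distinguish the subgroups from one another.
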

\begin{proof}

     Let $[f],[g]\in H_\alpha$. Then there exist $K_f,K_g,R_f,R_g>0$ such that
     \[\|f(x)-x\|\leq K_f\|x\|^{\alpha}\quad \text{for all }\|x\|\geq R_f\quad \text{and}\quad \|g(x)-x\|\leq K_g\|x\|^{\alpha}\quad \text{for all }\|x\|\geq R_g.\]
     Now,
     $\|fg(x)-x\|\leq \|f(g(x))-g(x)\|+\|g(x)-x\|.$
     Let $R_{f\circ g}=\max\{R_f,R_g\}$. Then
     \[\|fg(x)-x\|\leq K_f\|g(x)\|^{\alpha}+K_g\|x\|^{\alpha}\quad \text{for all $\|x\|\geq R_{f\circ g}$}. \]
     Since $g$ is a quasi-isometry, there exists a constant $K'>1$ such that $\|g(x)\|^{\alpha}\leq K'^{\alpha}\|x\|^{\alpha}$.
        Let $K_{f\circ g}=\max\{K'^{\alpha}K_f,K_g\}$. Then from above, for all $\|x\|\geq R_{f\circ g}$, $\|fg(x)-x\|\leq K_{f\circ g}\|x\|^{\alpha}.$
     This shows that $[f g]\in H_{\alpha}$.

Similarly, if $[f]\in H_\alpha$, then using that $f^{-1}$ is a quasi-isometry and $ff^{-1}$ is at bounded distance from the identity, one obtains
$
\|f^{-1}(x)-x\|\le C\|x\|^\alpha
$
for large $\|x\|$, showing $[f^{-1}]\in H_\alpha$. Therefore, $H_\alpha$ is a subgroup of $\QI(\mathbb{R}^n)$.

     Now, we show that $H_\alpha$ is normal in $\QI(\Rn)$. Let $[f]\in \QI(\Rn)$ and $[g]\in H_\alpha$. Then there exist $K_g, R_g>0$ such that $\|g(x)-x\|\leq K_g\|x\|^\alpha$ for all $\|x\|\geq R_g$. Then,
\begin{align*}
         \|fgf^{-1}(x)-x\|&=\|f(g(f^{-1}(x)))-f(f^{-1}(x))+f(f^{-1}(x))-x\|\\&\leq \|f(g(f^{-1}(x)))-f(f^{-1}(x))\|+\|f(f^{-1}(x))-x\|\\&\leq \lambda\|gf^{-1}(x)-f^{-1}(x)\|+ C+ \mu,
\end{align*}
   since $f$ is a quasi-isometry and $ff^{-1}$ is quasi-isometrically equivalent to the identity map with $\lambda>1$ and $C,\mu>0$.
   Again, since $[g]\in H_\alpha$, $\|gf^{-1}(x)-f^{-1}(x)\|\leq K_g\|f^{-1}(x)\|^{\alpha}$. Since $f^{-1}$ is a quasi-isometry, $\|f^{-1}(x)\|\leq a\|x\|+b$ for some $a>1$ and $b>0$. Then from above, we get, for all $\|x\|\geq R_g$,
\begin{align*}
       \|fgf^{-1}(x)-x\|&\leq \lambda K_g(a\|x\|+b)^{\alpha}+C+\mu\\&\leq \lambda K_g(a^\alpha\|x\|^\alpha+b^\alpha)+C+\mu \leq K'\|x\|^\alpha,\quad \text{for some }K'>0.
\end{align*}
   Therefore, $[f]\circ [g]\circ [f]^{-1}\in H_\alpha$.

Now we show that there is at least one non-identity element in $H_\alpha$.
  For example, let $f: \Rn \to \Rn$ be given by
   $f(x)=x+A\ln{(1+\|x\|)}v$
   where $v$ is a unit vector of $\Rn$ and $A$ is a real constant with $|A|<1$. It is straightforward to check that $[f]\in \QI(\Rn)$.
   For instance, note that
\[
| \ln(1+\|x\|)-\ln(1+\|y\|)|\le \|x-y\|, \text{ and therefore }
\]
\[
\|f(x)-f(y)\|=\|(x-y)+A(\ln(1+\|x\|)-\ln(1+\|y\|))v\|\le(1+|A|)\|x-y\|.
\]
A similar estimate gives a coarse linear lower bound and quasi-surjectivity.

Let $x_n=nu$ where $u=(1,0,0,\ldots, 0)$ be a sequence in $\Rn$. Now,
\[
\|f(x_n)-x_n\|=\|A\ln{(1+n)}\|\to \infty \text{ as } n \to \infty.
\]
Hence, $[f]\neq [id]$. Also,
  $$\frac{\|f(x)-x\|}{\|x\|^{\alpha}}=\frac{|A|\ln(1+\|x\|)}{\|x\|^\alpha}\leq |A|M_\alpha,$$ where $M_\alpha$ is a constant depending on $\alpha$. Such a constant $M_\alpha$ always exists since $h(r)=\displaystyle\frac{\ln{(1+r)}}{r^\alpha}$ is continuous on $(0, \infty)$ and both limits $\displaystyle\lim_{r\to \infty} h(r)$ and $\displaystyle\lim_{r\to 0^+} h(r)$ exist finitely. Therefore, $[f]\in H_\alpha$.
\end{proof}
\begin{corollary}
The group $\QI(\Rn)$ is not simple. Indeed, by the preceding theorem, it admits a family of nontrivial normal subgroups $H_\alpha$.
\end{corollary}
\begin{remark}
      Note that each $H_{\alpha}$ is a subgroup of $H$. For instance, let $[f]\in H_\alpha$, i.e., there exist $K_f, R_f>0$ such that $\|f(x)-x\|\leq K_f\|x\|^\alpha$ for all $\|x\| \geq R_f$. Since $\alpha-1<0$,
     $$\frac{\|f(x)-x\|}{\|x\|}\leq K_f\|x\|^{\alpha-1}\to 0 \text{ as } \|x\| \to\infty.$$
      This shows that $[f]\in H$ for each $\alpha\in (0,1).$
\end{remark}
\begin{remark}
      For $0<\alpha <\beta <1$, $H_{\alpha}\subseteq H_{\beta}$.

      For instance, let $[f]\in H_\alpha$, that is, there exist $K_f, R_f>0$ such that $\|f(x)-x\|\leq K_f\|x\|^\alpha$ for all $\|x\| \geq R_f$. Let $R'_f=\max\{1,R_f\}$. Thus, for $\|x\|\geq R'_f$, $\|x\|\geq 1$. Now
 $\|f(x)-x\|\leq K_f\|x\|^\alpha\leq K_f \|x\|^\beta \text{ for all $\|x\|\geq R'_f.$}$
 Therefore, $[f]\in H_\beta.$
\end{remark}
\begin{remark}

We observe that $\displaystyle\bigcup_{0<\alpha<1} H_{\alpha}$ is a proper subgroup of $H$.
For instance, let $f:\mathbb{R}^n \to \mathbb{R}^n$ be defined by $f(x)=x+\displaystyle\frac{x}{\ln(2+\|x\|)}$.
Since $\displaystyle\frac{\|f(x)-x\|}{\|x\|}=\displaystyle\frac{1}{\ln(2+\|x\|)}\!\to\!0$ as $\|x\|\!\to\!\infty$, we have $[f]\in H$.
However, for any $0<\alpha<1$ and for any $K>0$, $\|f(x)-x\|=\displaystyle\frac{\|x\|}{\ln(2+\|x\|)}\not\le K\|x\|^{\alpha}$ for large $\|x\|$, as $\displaystyle\frac{\|x\|^{1-\alpha}}{\ln(2+\|x\|)}\!\to\!\infty$, hence $[f]\notin H_{\alpha}$.
\end{remark}

To prove Theorem \ref{center_quotient}, we need the following result, which gives an equivalent statement about equality of two cosets in $\QI(\Rn)/H$.
\begin{lemma}\label{Lemma Center}
Let $[f], [g] \in \QI(\mathbb{R}^n)$. Then $[f]$ and $[g]$ lie in the same coset of $H$, i.e., $H[f]= H[g]$,
if and only if for every $\epsilon>0$ there exists $M>0$ such that
\[
\frac{\|f(x)-g(x)\|}{\|x\|}<\epsilon \quad \text{for all } \|x\|>M.
\]
\end{lemma}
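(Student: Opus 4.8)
The plan is to first translate the coset equality into a single asymptotic condition and then match it against the displayed one through the substitution $y=f(x)$. Observe that the displayed condition is precisely the statement $\lim_{\|x\|\to\infty}\frac{\|f(x)-g(x)\|}{\|x\|}=0$. On the other side, since $H$ is normal, $H[f]=H[g]$ holds if and only if $[g]\circ[f]^{-1}=[g\circ f^{-1}]\in H$, i.e.
\[
\lim_{\|y\|\to\infty}\frac{\|g(f^{-1}(y))-y\|}{\|y\|}=0 .
\]
Thus the lemma reduces to proving the equivalence of these two limits, one in the variable $x$ and the other in $y$, and the natural bridge between them is the reparametrization $y=f(x)$.

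Before the substitution I would record two standard consequences of the quasi-isometry hypotheses that the whole argument rests on. First, \emph{comparability of norms}: applying the quasi-isometry inequalities to the pair $x,0$ (and accounting for $\|f(0)\|$) gives constants with $\frac1K\|x\|-C_1\le\|f(x)\|\le K\|x\|+C_1$, and likewise for $f^{-1}$; hence $\|x\|\to\infty$ if and only if $\|f(x)\|\to\infty$, and the ratio $\|f(x)\|/\|x\|$ is trapped between positive constants for large $\|x\|$. Second, the \emph{coarse Lipschitz and near-identity} estimates: $\|g(a)-g(b)\|\le K\|a-b\|+C$, and since $f^{-1}\circ f$ and $f\circ f^{-1}$ are within bounded distance of the identity, $\|f^{-1}(f(x))-x\|\le\lambda$ and $\|f(f^{-1}(y))-y\|\le\lambda$ for a uniform $\lambda$.

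For the forward direction, assuming $[g\circ f^{-1}]\in H$, I would set $y=f(x)$ and estimate, using the coarse Lipschitz bound on $g$ to absorb the near-identity error,
\[
\|f(x)-g(x)\|\le\|g(x)-g(f^{-1}(f(x)))\|+\|g(f^{-1}(y))-y\|\le (K\lambda+C)+\|g(f^{-1}(y))-y\|.
\]
Dividing by $\|x\|$, the constant term vanishes in the limit, while
\[
\frac{\|g(f^{-1}(y))-y\|}{\|x\|}=\frac{\|g(f^{-1}(y))-y\|}{\|y\|}\cdot\frac{\|y\|}{\|x\|}
\]
is a factor tending to $0$ (the hypothesis, with $\|y\|\to\infty$) times a bounded factor (comparability). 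The reverse direction is symmetric: assuming the $x$-limit, put $x=f^{-1}(y)$, use $\|f(f^{-1}(y))-y\|\le\lambda$ to get $\|g(f^{-1}(y))-y\|\le\|g(x)-f(x)\|+\lambda$, divide by $\|y\|$, and rewrite $\frac{1}{\|y\|}=\frac{1}{\|x\|}\cdot\frac{\|x\|}{\|y\|}$ to again split into a vanishing factor times a bounded one.

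I expect the only genuine care needed to be the bookkeeping of the change of variables: verifying that $y=f(x)$ is a bijective-up-to-bounded-error reparametrization of the asymptotic region, so that $\|x\|\to\infty\Leftrightarrow\|y\|\to\infty$ and the ratio $\|y\|/\|x\|$ stays between positive constants. This is exactly what licenses discarding the bounded additive errors $\lambda,C$ after dividing and transferring a limit in one variable to a limit in the other; everything else is a routine triangle-inequality estimate.
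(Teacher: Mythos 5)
Your proof is correct, and its overall strategy coincides with the paper's: translate the coset equality $H[f]=H[g]$ into membership of a product class in $H$, then transfer the defining limit of $H$ between the variables $x$ and $y$ using the two standard quasi-isometry facts (comparability of $\|x\|$ and $\|f(x)\|$, and coarse Lipschitz plus near-identity estimates). The differences are in execution, and they actually work in your favor. The paper factors the other way, writing $f=h\circ g$ with $h=f\circ g^{-1}\in H$, and in the converse direction it parametrizes the asymptotic region by $y=g(x)$; strictly speaking this only yields the limit along points of $g(\mathbb{R}^n)$, and one still needs quasi-surjectivity of $g$ together with a coarse Lipschitz estimate on $h'$ to upgrade it to a limit over all $y$ — a step the paper glosses over. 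Your reverse direction avoids this entirely by evaluating at $x=f^{-1}(y)$ for an \emph{arbitrary} $y$, where $f^{-1}$ is a globally defined quasi-inverse, and absorbing the error $\|f(f^{-1}(y))-y\|\le\lambda$ explicitly; similarly, your forward direction keeps the bounded errors ($K\lambda+C$) visible rather than pretending the factorization $f=h\circ g$ holds exactly at the level of maps, which it need not. One tiny quibble: you invoke normality of $H$ to justify $H[f]=H[g]\iff[g\circ f^{-1}]\in H$, but this only uses that $H$ is a subgroup ($fg^{-1}\in H\iff gf^{-1}\in H$ by closure under inverses); normality is irrelevant here.
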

\begin{proof}
Suppose first that $[f]$ and $[g]$ belong to the same coset of $H$. Then there exists $[h]\in H$ such that
$f = h\circ g$. Hence, for all $x\in \mathbb{R}^n$,
\begin{align}\label{eq:equiv_condition}
\frac{\|f(x)-g(x)\|}{\|x\|}
= \frac{\|h(g(x))-g(x)\|}{\|g(x)\|}\cdot \frac{\|g(x)\|}{\|x\|}.
\end{align}
Since $[g]\in \QI(\mathbb{R}^n)$, the ratio $\|g(x)\|/\|x\|$ is bounded and $\|g(x)\|\to \infty$ as $\|x\|\to \infty$. Again, since $h\in H$, we have
\[
\lim_{\|x\|\to\infty}\frac{\|h(g(x))-g(x)\|}{\|g(x)\|}=0,
\text{ and therefore }
\lim_{\|x\|\to\infty}\frac{\|f(x)-g(x)\|}{\|x\|}=0 \text{ (by \eqref{eq:equiv_condition})}.
\]

  Conversely, assume the condition holds.
Define $[h'] := [fg^{-1}]$, so that $f = h'g$. We claim that $[h']\in H$. Indeed, for large $\|x\|$,
\begin{align}\label{eq:converse}
\frac{\|h'(g(x))-g(x)\|}{\|g(x)\|}\cdot \frac{\|g(x)\|}{\|x\|} < \epsilon.
\end{align}
Since $g$ is a quasi-isometry, there exists $M'>1$ such that, for all sufficiently large $\|x\|$, $\displaystyle\frac{1}{M'} < \displaystyle\frac{\|g(x)\|}{\|x\|}.$
Applying this to \eqref{eq:converse},
$$\frac{\|h'(g(x))-g(x)\|}{\|g(x)\|} < M'\epsilon.$$
Letting $y=g(x)$, and noting that $\|x\|\to\infty$ implies $\|y\|\to\infty$, we conclude that
\[
\lim_{\|y\|\to\infty}\frac{\|h'(y)-y\|}{\|y\|}=0.
\]
Therefore, $[h']\in H$. This completes the proof.
\end{proof}
\begin{remark}
As a consequence of Lemma \ref{Lemma Center}, the subgroup $H$ has infinite index in $\QI(\mathbb{R}^n)$. Indeed, for $\lambda>0$, consider the dilation $D_\lambda(x)=\lambda x$. Then
\[
\frac{\|D_\lambda(x)-D_\mu(x)\|}{\|x\|}=|\lambda-\mu|,
\]
so $[D_\lambda]$ and $[D_\mu]$ lie in the same coset of $H$ if and only if $\lambda=\mu$. Thus $\{[D_\lambda]:\lambda>0\}$ determines uncountably many distinct cosets in $\QI(\mathbb{R}^n)/H$.
\end{remark}

The preceding lemma provides a necessary and sufficient condition for two quasi-isometries $f$ and $g$ to commute modulo $H$. We now proceed to determine the structure of the center of $\QI(\Rn)/H$. In the proof of the following theorem, we use a technique similar to that in \cite{bhowmik_chakraborty}.
\subsection{Proof of Theorem \ref{center_quotient}}

 Let $H[f]$ be a non-identity element of $\QI(\mathbb{R}^n)/H$, that is, $[f]\notin H$.
Then there exists a sequence $\{x_n\}\subset\mathbb{R}^n$ with $\|x_n\|\to\infty$ such that
\[
\lim_{\|x_n\|\to\infty} \frac{\|f(x_n)-x_n\|}{\|x_n\|} > 0.
\]
Hence, we can find a subsequence $\{x_m\}$ and $\epsilon>0$ such that
$\|f(x_m)-x_m\| \;\geq\; \epsilon \|x_m\| \quad \text{for all } m.$
Since
$\|x_m\|,
\|f(x_m)\|,
\|f(x_m)-x_m\|
\longrightarrow \infty$,
there exists a subsequence $\{a_m\}$ of $\{x_m\}$ such that
$
\|a_m\|,
\|f(a_m)\|,
\|f(a_m)-a_m\|
$
are strictly increasing sequences tending to infinity and satisfying
$\|a_{m+1}\|>\|f(a_m)\|$
for all $m$.

Define integers
$m_j = \Big\lceil \tfrac{\epsilon}{2}\|a_j\|\Big\rceil.$
Then $m_j\geq \frac{\epsilon}{2}\|a_j\| \text{ for all } j$.
Passing to a subsequence $\{m_{j_k}\}$, we may assume $\{m_{j_k}\}$ is strictly increasing. We can construct a subsequence $\{a_{j_k}\}$ of $\{a_j\}$ such that
for each $k$,
\[
r_{m_{j_k}} := \min\!\Big\{ m_{j_k},\, \tfrac12\|f(a_{j_k})-a_{j_k}\|\Big\},
\]
is a strictly increasing sequence, and the closed balls $\overline{D_k}$ of radius $r_{m_{j_k}}$ centered at $f(a_{j_k})$
are pairwise disjoint and contain none of the points $a_{j_i},f(a_{j_i})$ for $i\neq k$. This can be checked by induction. We
define $g:\mathbb{R}^n\to\mathbb{R}^n$ by
\[
g(x) =
\begin{cases}
(\rho_k^{-1}\circ h \circ \rho_k)(x), & x\in \overline{D_k},\\
x, & x\notin \bigcup_k \overline{D_k},
\end{cases}
\]
 where $\displaystyle\rho_k(x) = \frac{x - f(a_{j_k})}{r_{m_{j_k}}}$ for $x\in\overline{D_k}$; this is the rescaling to the closed unit disk $\overline{\mathbb{D}^1_n}$, and
$h:\overline{\mathbb{D}^1_n}\to \overline{\mathbb{D}^1_n}$ is a quasi-isometry with
$h(0,\dots,0)=(0,\dots,0,\tfrac14)$.
By the results in \cite{bhowmik_chakraborty, mitra_sankaran}, such a $g$ is a quasi-isometry.

\noindent{Now we show that $[g]\notin H$.}
At the points $f(a_{j_k})$,
\begin{align}\label{Not in H}
    \frac{\|g(f(a_{j_k}))-f(a_{j_k})\|}{\|f(a_{j_k})\|}=\frac{\|f(a_{j_k})+(0,0,\ldots, \frac{r_{m_{j_k}}}{4})-f(a_{j_k})\|}{\|f(a_{j_k})\|}=\frac{r_{m_{j_k}}}{4\|f(a_{j_k})\|}.
\end{align}
By construction $r_{m_{j_k}} \geq \tfrac{\epsilon}{2}\|a_{j_k}\|$.
Since $f$ is a quasi-isometry, the ratio $\|f(a_{j_k})\|/\|a_{j_k}\|$ is bounded above and below away from $0$.
Thus,
\[
\limsup_{k\to\infty} \frac{\|g(f(a_{j_k}))-f(a_{j_k})\|}{\|f(a_{j_k})\|} > 0, \text{ and therefore } [g]\notin H.
\]
\noindent{Now,}
at the points $a_{j_k}$ we compute
\[\frac{\|(f\circ g) (a_{j_k})-(g\circ f)(a_{j_k})\|}{\|a_{j_k}\|}=\frac{\|(0,0,\ldots, \frac{r_{m_{j_k}}}{4})\|}{\|a_{j_k}\|}=\frac{r_{m_{j_k}}}{4\|a_{j_k}\|}\geq \frac{\epsilon}{8}.\]
By Lemma~\ref{Lemma Center}, this implies
$H[f\circ g]\neq H[g\circ f]$,
i.e.,\ $H[f]H[g]\neq H[g]H[f]$.

\noindent Therefore, the center of the quasi-isometry group $\QI(\mathbb{R}^n)/H$ is trivial.

Since $H_\alpha\subset H$ for each $\alpha\in(0,1)$, any nontrivial element $[f]\in \QI(\mathbb{R}^n)/H$ necessarily lies outside $H_\alpha$ as well. In the proof of the preceding theorem, we construct a quasi-isometry $[g]\notin H$ associated with such an element $[f]$, and hence $[g]\notin H_\alpha$. This immediately implies that the center of the quotient group $\QI(\mathbb{R}^n)/H_\alpha$ is trivial.
\hspace{8cm} \qedsymbol
\begin{remark}

A slight modification of the proof of Theorem~\ref{center_quotient} also shows that, for each
$0<\alpha<1$, the center of the quotient group
$
H/H_\alpha
$
is trivial.

Indeed, let $H_\alpha[f]$ be a nontrivial element of $H/H_\alpha$, so that
$[f]\in H\setminus H_\alpha$. Then there exists a sequence
$\{a_m\}\subset \mathbb{R}^n$ with $\|a_m\|\to\infty$ and a constant
$\varepsilon>0$ such that
\[
\|f(a_m)-a_m\|\ge \varepsilon \|a_m\|^\alpha
\quad \text{for all } m.
\]
Choose $\beta$ such that $\alpha<\beta<1$ and define
$r_m:=\min\left\{\|a_m\|^\beta,\,
\frac{1}{2}\|f(a_m)-a_m\|\right\}.$

Repeating the construction of Theorem~\ref{center_quotient} with closed balls centered at
$f(a_m)$ of radius $r_m$, we obtain a quasi-isometry $[g]$ such that
$[g]\in H$, since
\[
\frac{\|g(x)-x\|}{\|x\|}
\to 0
\quad \text{as } \|x\|\to\infty,
\]
which follows from the choice $\beta<1$. Furthermore, $[g]\notin H_\alpha$. Indeed,
\[
\frac{\|g(f(a_m))-f(a_m)\|}{\|f(a_m)\|^\alpha}
=
\frac{\|f(a_m)-a_m\|}{8\|f(a_m)\|^\alpha}.
\]
Since $[f]\notin H_\alpha$, it follows that
\[
\frac{\|f(a_m)-a_m\|}{\|a_m\|^\alpha}\to\infty,
\text{ which implies }\]
\[ \frac{\|g(f(a_m))-f(a_m)\|}{\|f(a_m)\|^\alpha}\to\infty.\]
On the other hand,
\[
\|(f\circ g)(a_m)-(g\circ f)(a_m)\|
=
\frac{r_m}{4}, \text{ which implies }\] 
\[\frac{\|(f\circ g)(a_m)-(g\circ f)(a_m)\|}{\|a_m\|^\alpha}
=
\frac{r_m}{4\|a_m\|^\alpha}
\to\infty,
\]

since $\beta>\alpha$.
Therefore,
$H_\alpha[f\circ g]\neq H_\alpha[g\circ f].$
Consequently,
$Z(H/H_\alpha)=\{e\}$.
\end{remark}

From the theorem above, the centers of both quotient groups $\QI(\mathbb{R}^n)/H$ and $\QI(\mathbb{R}^n)/H_\alpha$ are trivial. To further understand commutation phenomena in $\QI(\mathbb{R}^n)$, it is natural to study the centralizers of $H$ and $H_\alpha$ themselves. We denote these by
\begin{align*}
    C(H)=&\{[f]\in \QI(\mathbb{R}^n): [f]\circ[g]=[g]\circ[f]\ \text{for all }[g]\in H\} \text{ and }\\
C(H_\alpha)=&\{[f]\in \QI(\mathbb{R}^n): [f]\circ[g]=[g]\circ[f]\ \text{for all }[g]\in H_\alpha\}.
\end{align*}
\begin{lemma}
   The centralizer of $H_\alpha$ in $\QI(\Rn)$ is trivial.
\end{lemma}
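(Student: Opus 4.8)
The plan is to prove the sharp statement $C(H_\alpha)=\{[\mathrm{id}]\}$: given any $[f]\in QI(\Rn)$ with $[f]\neq[\mathrm{id}]$, I will manufacture a single element $[g]\in H_\alpha$ for which $[f]\circ[g]\neq[g]\circ[f]$, so that $[f]\notin C(H_\alpha)$. The mechanism is the same local-perturbation construction used in the proof of Theorem~\ref{center_quotient}, with two adaptations. First, the supporting balls must now be small enough that $g$ displaces each point by at most $K\|x\|^\alpha$, so that $[g]$ lands in $H_\alpha$ rather than merely in $QI(\Rn)\setminus H$. Second, non-commutativity must be detected on the nose, i.e.\ the commutator must have \emph{unbounded} displacement, not merely nonzero displacement modulo $H$. (This is also consistent with Theorem~\ref{Center_H}: since $C(H_\alpha)\cap H_\alpha=Z(H_\alpha)=\{[\mathrm{id}]\}$, the content here is that no element outside $H_\alpha$ centralizes it either.)

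First I would use that $[f]\neq[\mathrm{id}]$ means $\|f(x)-x\|$ is unbounded, so there is a sequence $(a_k)$ with $\|f(a_k)-a_k\|\to\infty$. Since a quasi-isometry has bounded displacement on bounded sets and satisfies a coarse lower bound $\|f(x)\|\geq\frac1K\|x\|-C'$, it follows that both $\|a_k\|\to\infty$ and $\|f(a_k)\|\to\infty$. Passing to a sparse subsequence, chosen inductively so that each new center $a_k$ and its image $f(a_k)$ have norm far larger than everything selected before, I arrange that the closed balls $\overline{D_k}$ centered at $a_k$ of radius $r_k$ are pairwise disjoint and that none of the points $f(a_j)$ lies in any $\overline{D_k}$. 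Here the radii are chosen to satisfy $r_k\to\infty$, $r_k\leq\|a_k\|^\alpha$, and $r_k<\|f(a_k)-a_k\|$, the last condition forcing $f(a_k)\notin\overline{D_k}$.

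Next I would define $g$ exactly as before: $g=\rho_k^{-1}\circ h\circ\rho_k$ on $\overline{D_k}$ and $g=\mathrm{id}$ off $\bigcup_k\overline{D_k}$, where $\rho_k(x)=(x-a_k)/r_k$ rescales $\overline{D_k}$ to the unit ball and $h$ is a fixed quasi-isometry of the unit ball with $h(0,\dots,0)=(0,\dots,0,\tfrac14)$; by \cite{bhowmik_chakraborty, mitra_sankaran} this $g$ is a quasi-isometry. Writing $e_n=(0,\dots,0,1)$, the construction gives $g(a_k)=a_k+\tfrac{r_k}{4}e_n$. Since $h$ preserves the unit ball, $g$ maps each $\overline{D_k}$ into itself, so $\|g(x)-x\|\leq 2r_k\leq 2\|a_k\|^\alpha\leq 4\|x\|^\alpha$ for $x\in\overline{D_k}$ (using $\|x\|\geq\tfrac12\|a_k\|$ on $\overline{D_k}$ for large $k$), while $\|g(x)-x\|=0$ elsewhere; hence $[g]\in H_\alpha$. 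Evaluating the commutator at $a_k$, the avoidance condition $f(a_k)\notin\bigcup_j\overline{D_j}$ forces $g(f(a_k))=f(a_k)$, so, using the quasi-isometry lower bound for $f$,
\[
\|(f\circ g)(a_k)-(g\circ f)(a_k)\|=\left\|f\!\left(a_k+\tfrac{r_k}{4}e_n\right)-f(a_k)\right\|\geq\frac{1}{K}\cdot\frac{r_k}{4}-C\longrightarrow\infty .
\]
Thus $f\circ g$ and $g\circ f$ are at unbounded distance, so $[f]\circ[g]\neq[g]\circ[f]$ and $[f]\notin C(H_\alpha)$; therefore $C(H_\alpha)=\{[\mathrm{id}]\}$.

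The main obstacle I anticipate is the bookkeeping in the second step: simultaneously guaranteeing pairwise disjointness of the balls and the avoidance condition $f(a_j)\notin\overline{D_k}$ for all $j,k$, while keeping $r_k\leq\|a_k\|^\alpha$ so that $[g]$ remains in $H_\alpha$. This is handled by the inductive sparsification above: since both $\|a_k\|$ and $\|f(a_k)\|$ tend to infinity and the radii grow slowly, each newly chosen ball can be placed far beyond the finitely many points and balls already committed. It is the only point requiring genuine care, as the commutator estimate itself is immediate once the geometry is arranged.
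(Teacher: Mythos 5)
Your proof is correct and follows essentially the same strategy as the paper's: a sparse, inductively chosen family of disjoint balls with radii $r_k=\min\{\|a_k\|^{\alpha},\tfrac12\|f(a_k)-a_k\|\}\to\infty$ supporting a rescaled copy of a fixed quasi-isometry $h$ of the unit ball, giving $[g]\in H_\alpha$, with non-commutativity detected along the sequence. The only notable variation is that you center the balls at the points $a_k$ rather than at their images $f(a_k)$ as the paper does, so your commutator estimate needs the quasi-isometry lower bound $\|f(g(a_k))-f(a_k)\|\ge \tfrac{1}{K}\cdot\tfrac{r_k}{4}-C$, whereas the paper reads off the exact displacement $\tfrac{r_m}{4}$ directly; both yield an unbounded commutator and the same conclusion.
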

\begin{proof}

     Suppose $[f]$ is a non-identity element of $\QI(\mathbb{R}^n)$. Then there exists a sequence
$\{a_m\}$ in $\mathbb{R}^n$ such that $\|a_m\|$, $\|f(a_m)\|$, and $\|f(a_m)-a_m\|$ are monotonically increasing sequences converging to infinity, with the additional property that $\|a_{m+1}\| > \|f(a_m)\|$ for each $m$.
Since $f$ is a quasi-isometry of $\Rn$, there exist $K>1$ and $C>0$ such that for all $x_1,x_2\in\Rn$,
\begin{center}
		$\displaystyle\frac {1}{K}\|x_1-x_2\|-C\leq \|f(x_1)-f(x_2)\|\leq K\|x_1-x_2\|+C$.
\end{center}
From this sequence $\{a_m\}$, we can extract a subsequence $\{b_m\}$ together with disjoint balls $\overline{D_m}$ centered at $f(b_m)$ of radii $r_m$, satisfying:
\begin{enumerate}
    \item $\{r_m\}$ is strictly increasing and diverges to infinity,
    \item each $\overline{D_m}$ contains no point of $\{b_j\}$ and no $f(b_j)$ other than $f(b_m)$, and
    \item $\overline{D_p} \cap \overline{D_q} = \varnothing$ for $p \neq q$,
\end{enumerate}
with
\[
r_m = \min\left\{\displaystyle\lfloor \frac{{\|b_m\|}^{\frac {\alpha}{2}}}{K+1}\rfloor, \, \tfrac{1}{2}\|f(b_m)-b_m\|\right\}.
\]
Next, we construct an element $[g] \in H_\alpha$ (in the same spirit as in the proof of Theorem~\ref{center_quotient}) and show that $[f]$ and $[g]$ do not commute.

First, if $x \notin \bigcup_m \overline{D_m}$, then $g(x)=x$. If $x \in \overline{D_m}$ for some $m$, then $g(x)\in \overline{D_m}$, and hence
\[
\|g(x)-x\| \leq C' r_m,
\text{ for some uniform constant }C'.\]
Since $f$ is a quasi-isometry,
\[
\frac{1}{K}\|b_m\|-C \leq \|f(b_m)\| \leq K\|b_m\|+C.
\]
Moreover, for any $x \in \overline{D_m}$, $\bigl|\|x\|-\|f(b_m)\|\bigr| \leq \|x-f(b_m)\| \leq r_m.$ 

\noindent This implies
$
-r_m + \|f(b_m)\| \leq \|x\| \leq r_m + \|f(b_m)\|.$

\noindent Since $r_m \leq \displaystyle\frac {{\|b_m\|}^{\frac {\alpha}{2}}}{K+1} \leq \displaystyle\frac {\|b_m\|}{K+1}$, it follows that
\[
-\displaystyle\frac {\|b_m\|}{K+1}+\frac{1}{K}\|b_m\|-C \leq \|x\| \leq \displaystyle\frac {\|b_m\|}{K+1}+K\|b_m\|+C.
\]
Consequently, for large $m$
\[
\frac{\|g(x)-x\|}{\|x\|^{\alpha}} \leq \frac{C'r_m}{\left(\frac {\|b_m\|}{K(K+1)}-C\right)^{\alpha}} \leq \frac{\frac {C'{\|b_m\|^{\frac {\alpha}{2}}}}{K+1}}{\left(\frac {\|b_m\|}{K(K+1)}-C\right)^{\alpha}} \longrightarrow 0 \quad \text{as } m\to\infty.
\]
Therefore, in all cases,
 $[g] \in H_\alpha$.
 Finally, consider the sequence $\{b_m\}$. By construction of $g$, we obtain
\[
\|(f\circ g)(b_m)-(g\circ f)(b_m)\| = \Big\|\big(0,0,\ldots,\tfrac{r_m}{4}\big)\Big\| \to \infty
\quad \text{as } m\to\infty.
\]
Hence, $[f]\circ [g] \neq [g]\circ [f]$. This shows that the centralizer $C(H_\alpha)$ of $H_\alpha$ is trivial.
\end{proof}
 We are now ready to prove Theorem \ref{Center_H}.
\subsection{Proof of Theorem \ref{Center_H}}
    From the lemma above, we conclude that the center of $H_\alpha$, $Z(H_\alpha)$, is trivial, since $Z(H_\alpha)=C(H_\alpha)\cap H_\alpha$ and $C(H_\alpha)$ is trivial.
    Since $H_\alpha$ is a subgroup of $H$ for each $\alpha\in(0,1)$, the lemma above also shows that the centralizer of $H$ is trivial: if $[g]\in H_\alpha$, then $[g]\in H$, and $[g]$ does not commute with the nontrivial element $[f]$ of $\QI(\Rn)$. Therefore, the center of $H$, $Z(H)$, is also trivial, since $Z(H)=C(H)\cap H$.
    \hspace{9cm}\qedsymbol

In group theory, torsion plays a crucial role, particularly in the study of geometric groups and their structure. A key algebraic result of this paper is the existence of nontrivial torsion elements in the quotient $\QI(\Rn)/H$.
\begin{lemma}
For each $n\in\mathbb{N}$, the quotient group $\QI(\mathbb{R}^n)/H$ has a torsion element.
\end{lemma}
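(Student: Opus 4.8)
The plan is to exhibit an element $[f]\in QI(\mathbb{R}^n)$ that is periodic \emph{as a map}---so that $f^m$ is literally the identity for some $m>1$, forcing $(H[f])^m = H[f^m]=H[\mathrm{id}]$---yet whose asymptotic displacement ratio $\|f(x)-x\|/\|x\|$ stays bounded away from $0$, so that $[f]\notin H$ and the coset $H[f]$ is a genuinely nontrivial element of the quotient. The cleanest uniform candidate is the antipodal map $f(x)=-x$; for $n\ge 2$ one may alternatively take a rational rotation $\widetilde{R}_\theta$ with $\theta=2\pi/m$, in the notation of the preceding Proposition.

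First I would record that $f$ is an isometry of $\mathbb{R}^n$, hence in particular a quasi-isometry, so $[f]\in QI(\mathbb{R}^n)$ is well defined. Next I would verify $[f]\notin H$ by a direct displacement estimate. For the antipodal map,
\[
\frac{\|f(x)-x\|}{\|x\|}=\frac{\|-2x\|}{\|x\|}=2\qquad\text{for all }x\neq 0,
\]
so the defining limit for $H$ equals $2\neq 0$. For the rotation $\widetilde{R}_\theta$, writing $x=(x',x'')\in\mathbb{R}^2\oplus\mathbb{R}^{n-2}$ gives
\[
\|\widetilde{R}_\theta(x)-x\|=2\,|\sin(\theta/2)|\,\|x'\|,
\]
and evaluating along a sequence $x_k$ lying in the rotation plane (so $x''=0$ and $\|x'\|=\|x_k\|\to\infty$) yields the constant ratio $2|\sin(\theta/2)|>0$ for $\theta\in(0,2\pi)$. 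In either case the limit defining $H$ fails to vanish, so $[f]\notin H$.

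Finally I would observe that $f^2=\mathrm{id}$ for the antipodal map (respectively $\widetilde{R}_\theta^{\,m}=\widetilde{R}_{2\pi}=\mathrm{id}$ for the rotation), whence $(H[f])^2=H[\mathrm{id}]$ (respectively of order dividing $m$). Combined with $[f]\notin H$, the coset $H[f]$ is a nontrivial torsion element of $QI(\mathbb{R}^n)/H$, establishing the lemma for every $n\in\mathbb{N}$.

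There is no serious obstacle here; the only points requiring care are, first, that $[f]\notin H$ concerns the limit along \emph{every} escaping sequence, which is immediate once the displacement ratio is seen to be constant (or bounded below along an explicit sequence tending to infinity), and second, that the chosen map has finite order \emph{exactly}, not merely up to bounded distance, so that $[f^m]$ is literally the identity class. Both the antipodal map and rational rotations satisfy this, so the argument goes through uniformly in $n$.
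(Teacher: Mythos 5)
Your proposal is correct and takes essentially the same approach as the paper: exhibit a finite-order isometry whose displacement ratio $\|f(x)-x\|/\|x\|$ stays bounded away from zero along a sequence going to infinity (the paper uses the reflection $x\mapsto -x$ for $n=1$ and the block rotation $\widetilde{R}_{2\pi/k}$ for $n\ge 2$), so that its coset is a nontrivial torsion element of $QI(\mathbb{R}^n)/H$. The only difference is cosmetic: your antipodal map $x\mapsto -x$ handles all $n$ uniformly with order $2$, while the paper's rotation construction additionally yields elements of every finite order $k\ge 2$ when $n\ge 2$.
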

\begin{proof}
If $n=1$, take the reflection $r:\mathbb{R}\to\mathbb{R}$, $r(x)=-x$.
Then $r$ is an isometry, $r^2=\mathrm{id}$, and
\[
\frac{\|r(x)-x\|}{\|x\|}=\frac{2\|x\|}{\|x\|}=2\nrightarrow0,
\]
so $[r]\notin H$. Hence $[r]H$ is a nontrivial element of order $2$ in $\QI(\mathbb{R})/H$.

Now let $n\ge2$ and fix an integer $k\ge2$. Let $R_{2\pi/k}\in \SO(2)$ be the rotation by angle $2\pi/k$ in a chosen $2$-plane of $\mathbb{R}^n$, and let
\[
\widetilde R=\begin{pmatrix} R_{2\pi/k} & 0\\[4pt] 0 & I_{n-2}\end{pmatrix}\in \mathrm{O}(n)
\]
be the block-diagonal extension acting as the identity on the orthogonal complement. Since $\widetilde R$ is an isometry, we have $[\widetilde R]\in \QI(\mathbb{R}^n)$ and $\widetilde R^k=\mathrm{id}$, so $[\widetilde R]^k=[\mathrm{id}]$.

It remains to show $[\widetilde R]\notin H$. Write a point $x\in\mathbb{R}^n$ as $x=(v,w)$ with $v$ in the chosen $2$-plane and $w$ in its orthogonal complement. Using the standard chord formula for a rotation by angle $\theta=2\pi/k$,
\begin{align*}
\|\widetilde R(x)-x\|^2&=\|R_{2\pi/k}(v)-v\|^2\\&=\|R_{2\pi/k}(v)\|^2+\|v\|^2-2\|R_{2\pi/k}(v)\|\|v\| \cos\!\Big(\frac{2\pi}{k}\Big)\\&=4\|v\|^2\sin^2\Big(\frac{\pi}{k}\Big).
\end{align*}
Hence, for any $x=(v,w)$,
\[
0\le \frac{\|\widetilde R(x)-x\|}{\|x\|}=\frac{2\|v\|\sin(\pi/k)}{\sqrt{\|v\|^2+\|w\|^2}}\le 2\sin\!\Big(\frac{\pi}{k}\Big).
\]
Taking the sequence $x_m=(v_m,0)$ with $\|v_m\|\to\infty$ shows
\[
\limsup_{\|x\|\to\infty}\frac{\|\widetilde R(x)-x\|}{\|x\|}=2\sin\!\Big(\frac{\pi}{k}\Big)>0,
\]
while taking $x_m=(v_0,w_m)$ with fixed nonzero $v_0$ and $\|w_m\|\to\infty$ gives
\[
\liminf_{\|x\|\to\infty}\frac{\|\widetilde R(x)-x\|}{\|x\|}=0.
\]
In particular, the (ordinary) limit $\displaystyle\lim_{\|x\|\to\infty}\displaystyle\frac{\|\widetilde R(x)-x\|}{\|x\|}$ does not vanish, hence $[\widetilde R]\notin H$. Therefore the coset $[\widetilde R]H$ is a nontrivial torsion element of order dividing $k$ in $\QI(\mathbb{R}^n)/H$. Since the same argument applies to $\widetilde R^m$ for $1\le m<k$ (whose corresponding angles $2\pi m/k$ are nonzero), the order of $[\widetilde R]H$ in the quotient is exactly $k$.

Thus, for every $n\ge1$ the quotient $\QI(\mathbb{R}^n)/H$ contains a nontrivial torsion element (order $2$ when $n=1$, and elements of any finite order $k\ge2$ when $n\ge2$).
\end{proof}
\begin{corollary}
For each $n\in\mathbb{N}$ and every $0<\alpha<1$, the quotient group $\QI(\mathbb{R}^n)/H_\alpha$ contains nontrivial torsion elements.
\end{corollary}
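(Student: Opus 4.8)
The plan is to reuse the very torsion elements constructed in the preceding lemma, exploiting the nesting $H_\alpha\subseteq H$ so that non-membership in $H$ automatically descends to non-membership in $H_\alpha$. For $n=1$ I would take the reflection $r(x)=-x$, and for $n\ge 2$ the block rotation $\widetilde R$ by angle $2\pi/k$ (for any fixed integer $k\ge 2$) that appears in the lemma. Being isometries of finite order, these already satisfy $r^2=\mathrm{id}$ and $\widetilde R^{\,k}=\mathrm{id}$ in $QI(\mathbb{R}^n)$, so the only point at issue is whether their cosets remain nontrivial modulo $H_\alpha$.

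First I would record that $[\,\widetilde R\,]\notin H_\alpha$, which requires no new estimate: the lemma already proves $[\,\widetilde R\,]\notin H$, and since $H_\alpha\subseteq H$ (established earlier in this section), membership in $H_\alpha$ would force membership in $H$, a contradiction. The same reasoning gives $[r]\notin H_\alpha$ when $n=1$. Hence the cosets $[\,\widetilde R\,]H_\alpha$ and $[r]H_\alpha$ are nontrivial elements of $QI(\mathbb{R}^n)/H_\alpha$.

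Next I would pin down the order. From $\widetilde R^{\,k}=\mathrm{id}$ we get $\big([\,\widetilde R\,]H_\alpha\big)^k=H_\alpha$, so the order divides $k$. For the reverse divisibility I would invoke the natural surjection $\pi\colon QI(\mathbb{R}^n)/H_\alpha \twoheadrightarrow QI(\mathbb{R}^n)/H$ induced by $H_\alpha\subseteq H$: it sends $[\,\widetilde R\,]H_\alpha$ to $[\,\widetilde R\,]H$, which has order exactly $k$ by the lemma. Since the order of an element is always a multiple of the order of its homomorphic image, $k\mid \mathrm{ord}\big([\,\widetilde R\,]H_\alpha\big)$, and combining the two divisibilities yields order exactly $k$ (and order $2$ for the reflection when $n=1$). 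In particular, for $n\ge 2$ the quotient contains torsion of every finite order $k\ge 2$.

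There is essentially no obstacle to overcome here; the result is a formal consequence of the lemma together with the inclusion $H_\alpha\subseteq H$. The only subtlety worth stating explicitly is the \emph{direction} of that inclusion—it is precisely because $H_\alpha$ is the smaller group that non-membership propagates downward, which is exactly what keeps the constructed elements nontrivial in the finer quotient $QI(\mathbb{R}^n)/H_\alpha$.
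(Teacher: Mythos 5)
Your proof is correct and takes essentially the same approach as the paper: reuse the reflection (for $n=1$) and the block rotation $\widetilde R$ (for $n\ge 2$) from the preceding lemma, and exploit the inclusion $H_\alpha\subseteq H$ so that non-membership in $H$ descends to non-membership in $H_\alpha$. The only cosmetic difference is that you pin down the exact order via the canonical surjection $QI(\mathbb{R}^n)/H_\alpha\twoheadrightarrow QI(\mathbb{R}^n)/H$, whereas the paper argues directly that $\widetilde R^{\,m}\notin H$ (hence $\widetilde R^{\,m}\notin H_\alpha$) for $1\le m<k$; the two formulations are equivalent.
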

\begin{proof}
Since $H_\alpha\subset H$, any class $[g]\notin H$ is also not in $H_\alpha$.
From the proof of the previous lemma there exists, for each integer $k\ge2$, an isometry $\widetilde R\in \mathrm{O}(n)$ with $\widetilde R^k=\mathrm{id}$ but $\widetilde R^m\notin H$ for $1\le m<k$.
Thus, $\widetilde R^m\notin H_\alpha$ for $1\le m<k$, while $\widetilde R^k=\mathrm{id}\in H_\alpha$.
Therefore, the coset $[\widetilde R]H_\alpha$ is a nontrivial element of exact order $k$ in $\QI(\mathbb{R}^n)/H_\alpha$.
The $n=1$ case is analogous using the reflection of order $2$. Hence $\QI(\mathbb{R}^n)/H_\alpha$ is not torsion-free.
\end{proof}

We are now ready to prove Theorem \ref{left_orderable}.
\subsection{Proof of Theorem \ref{left_orderable}}
Since any left-orderable group is torsion-free, the lemma above and the corresponding corollary immediately imply that the quotient groups $\QI(\Rn)/H$ and $\QI(\Rn)/H_\alpha$ are not left-orderable. It is well known that every locally indicable group is left-orderable, so the groups $\QI(\Rn)/H$ and $\QI(\Rn)/H_\alpha$ are not locally indicable.
\hfill\qedsymbol
\begin{remark}
When $n=1$, every element $[f]\in H$ preserves the two ends of $\mathbb{R}$. Indeed, if
\[
\lim_{|x|\to\infty}\frac{|f(x)-x|}{|x|}=0,
\]
then, for sufficiently large $|x|$,
\[
|f(x)-x|<\frac{|x|}{2},
\]
which implies $f(x)\to\infty$ whenever $x\to\infty$ and $f(x)\to-\infty$ whenever $x\to-\infty$. Hence
$H\subseteq \QI^{+}(\mathbb{R})$. Since $\QI^{+}(\mathbb{R})$ is torsion-free by \cite[Lemma~3.1]{sankaran}, it follows that $H$ is torsion-free when $n=1$.
For $n\ge2$, the existence of nontrivial torsion elements in $H$ appears to be an open question.
\end{remark}
\begin{remark}
The embeddings constructed by Mitra and Sankaran \cite[Theorem 1.1]{mitra_sankaran} also induce faithful embeddings of several natural groups into the quotient $\QI(\mathbb{R}^n)/H$.
Indeed, let $\phi\in \Bilip(S^{n-1})$ and let $\widetilde{\phi}$ denote its radial extension to $\mathbb{R}^n$. If $\phi\neq id$, then there exists $u\in S^{n-1}$ such that $\phi(u)\neq u$. Hence,
\[
\frac{\|\widetilde{\phi}(ru)-ru\|}{\|ru\|}
=
\|\phi(u)-u\|>0
\]
for every $r>0$, so $[\widetilde{\phi}]\notin H$. Consequently, the embedding $\Bilip(S^{n-1})\hookrightarrow \QI(\mathbb{R}^n)$
constructed in \cite{mitra_sankaran} descends to a faithful embedding $\Bilip(S^{n-1})\hookrightarrow \QI(\mathbb{R}^n)/H.$
The same argument applies to the subgroups $\Diff^{\,r}(S^{n-1})$ and $\PL(S^{n-1})$. Since every element of $\GL(n,\mathbb{R})$ is bi-Lipschitz, there is a natural homomorphism $\GL(n,\mathbb{R})\longrightarrow \QI(\mathbb{R}^n)/H.$
It is immediate from the definition of $H$ that this homomorphism is injective and hence embeds faithfully into $\QI(\mathbb{R}^n)/H$.
\end{remark}

The triviality of the centralizers of $H$ and $H_\alpha$ implies that no nontrivial element of $\QI(\mathbb{R}^n)$ commutes with all elements of these subgroups. We now examine centralizers of specific elements of $\QI(\mathbb{R}^n)$, focusing on geometrically natural examples.
\begin{proposition}
For a quasi-isometry class $[f]\in \QI(\mathbb{R}^n)$, the centralizer
\[
C_{\QI(\mathbb{R}^n)}([f])=\{[g]\in \QI(\mathbb{R}^n): [f]\circ[g]=[g]\circ[f]\}
\]
satisfies the following:
\begin{enumerate}
    \item $C_{\QI(\mathbb{R}^n)}([T_v]) = \QI(\mathbb{R}^n)$, where $T_v(x)=x+v$ is a translation.

    \item $\mathcal{A} \subset C_{\QI(\mathbb{R}^n)}([D_{\lambda}])$, where
  $\mathcal{A}=\{f:\mathbb{R}^n\to\mathbb{R}^n : f(x)=Ax+b,\; A\in \GL(n,\mathbb{R}),\, b\in\mathbb{R}^n\}$
    and $D_{\lambda}(x)=\lambda x$ for $\lambda\neq 1$. Moreover, this inclusion is proper.

 \end{enumerate}
\end{proposition}
\begin{proof}
\noindent (1)
Since $T_v(x)=x+v$ differs from the identity by a uniformly bounded amount, every class commutes with $[T_v]$. Therefore,
$C_{\QI(\mathbb{R}^n)}([T_v]) = \QI(\mathbb{R}^n).$

\noindent (2)
Let $f(x)=Ax+b\in\mathcal{A}$. Then
$(D_{\lambda}\circ f)(x)-(f\circ D_{\lambda})(x)
= \lambda(Ax+b) - (A\lambda x + b)
= (\lambda-1)b.$
The right-hand side is a constant vector; hence $\sup_{x\in\mathbb{R}^n}\|(D_{\lambda}\circ f)(x)-(f\circ D_{\lambda})(x)\| < \infty,$
which yields $[f]\in C_{\QI(\mathbb{R}^n)}([D_{\lambda}])$.

\noindent To see the inclusion is proper, consider in polar coordinates on $\mathbb{R}^2$ the map $f(r,\theta)=(e^{\sin\theta}r,\theta).$
This map is not affine, hence $f\notin \mathcal{A}$. However, $D_{\lambda}(r,\theta)=(\lambda r,\theta)$, and one can check that $D_{\lambda}\circ f = f\circ D_{\lambda}.$ Therefore, $[f]\in C_{\QI(\mathbb{R}^n)}([D_{\lambda}])$ while $f\notin\mathcal{A}$.
\end{proof}
\section{Asymptotic topology on $\QI(\mathbb{R}^n)$}\label{Discussion}

In the previous sections, we studied algebraic properties of $\QI(\mathbb{R}^n)$ via the subgroups $H$ and $H_\alpha$, defined through asymptotic deviation from the identity. As quasi-isometries are inherently large-scale objects, it is natural to consider a topology reflecting their behavior at infinity.

The compact-open topology is well suited for studying local properties of maps. However, quasi-isometries are inherently large-scale objects, and the subgroup $H$ is defined through asymptotic behavior at infinity. These considerations motivate the introduction of an asymptotic topology on $\QI(\mathbb{R}^n)$, defined in terms of relative deviation at infinity and naturally adapted to the large-scale structure of quasi-isometries and the subgroup $H$.
\subsection{Asymptotic topology}

Let $[f] \in \QI(\mathbb{R}^n)$, $\varepsilon > 0$, and $R > 0$. Define
\[
U([f]; \varepsilon, R)
=
\left\{
[g] \in \QI(\mathbb{R}^n)
:
\sup_{\|x\|\ge R} \frac{\|g(x)-f(x)\|}{\|x\|} < \varepsilon
\right\}.
\]
\begin{proposition}
The collection
\[
\mathcal{B}
=
\left\{
U([f]; \varepsilon, R)
:
[f] \in \QI(\mathbb{R}^n),\ \varepsilon>0,\ R>0
\right\}
\]
forms a basis for a topology on $\QI(\mathbb{R}^n)$.
\end{proposition}
\begin{proof}
For each $[f]$, clearly $[f] \in U([f]; \varepsilon, R)$ for all $\varepsilon, R > 0$.
Let $[h] \in U([f_1]; \varepsilon_1, R_1) \cap U([f_2]; \varepsilon_2, R_2)$. Set
\[
A_i = \sup_{\|x\|\ge R_i} \frac{\|h(x)-f_i(x)\|}{\|x\|}, \quad i=1,2.
\]
Choose $\varepsilon' = \min\{\varepsilon_1 - A_1,\ \varepsilon_2 - A_2\} > 0$ and $R' = \max\{R_1, R_2\}$. 

\noindent Then, for any $[g] \in U([h]; \varepsilon', R')$,
\begin{align*}
\sup_{\|x\|\ge R_i} \frac{\|g(x)-f_i(x)\|}{\|x\|}
&\le
\sup_{\|x\|\ge R'} \frac{\|g(x)-h(x)\|}{\|x\|} + A_i
< \varepsilon_i, \text{ which shows}\\
U([h]; \varepsilon', R') &\subset
U([f_1]; \varepsilon_1, R_1) \cap U([f_2]; \varepsilon_2, R_2).
\end{align*}
If \(f, g : \mathbb{R}^n \to \mathbb{R}^n\) are quasi-isometries such that
\([f] = [g]\), then there exists \(M > 0\) with
\(\displaystyle\sup_{x\in\Rn} \|f(x) - g(x)\| \le M\).
For any \([h] \in U([f]; \varepsilon, R)\),
\[
\|h(x) - g(x)\| \le \|h(x) - f(x)\| + \|f(x) - g(x)\|
\le \varepsilon \|x\| + M,
\]
so
\[
\sup_{\|x\| \ge R} \frac{\|h(x) - g(x)\|}{\|x\|}
\le \varepsilon + \frac{M}{R}.
\]
Thus, \(U([f]; \varepsilon, R) \subseteq U([g]; \varepsilon + M/R, R)\),
and, by symmetry, the reverse inclusion holds as well. Therefore, the family of basic neighborhoods is independent of the choice of representatives, and the topology is well defined on $\QI(\mathbb{R}^n)$.
\end{proof}
\begin{definition}
The topology generated by the basis $\mathcal{B}$ is called the \emph{asymptotic topology} on $\QI(\mathbb{R}^n)$.
\end{definition}
\subsection[Asymptotic pseudo-metric and relation with H]{Asymptotic pseudo-metric and relation with \texorpdfstring{$H$}{H}}

The asymptotic topology admits a natural description in terms of a pseudo-metric that captures large-scale deviation between quasi-isometries. Define
\[
d([f],[g]) = \limsup_{\|x\|\to\infty} \frac{\|f(x)-g(x)\|}{\|x\|}.
\]
It is straightforward to verify that $d$ is a well-defined pseudo-metric on $\QI(\mathbb{R}^n)$.

Moreover, $d$ induces a genuine metric on the quotient $\QI(\mathbb{R}^n)/H$ via
\[
\bar d([f]H,[g]H) := d([f],[g]),
\]
which is well defined and makes the quotient space Hausdorff. Thus, while $\QI(\mathbb{R}^n)$ itself carries only a pseudo-metric, passing to the quotient by $H$ yields a natural metric structure reflecting nontrivial asymptotic behavior. This provides a direct topological interpretation of the quotient studied in Section \ref{Properties}.

\subsection{An asymptotic stretch invariant}

The asymptotic topology also gives rise to natural invariants that encode the large-scale behavior of quasi-isometries. One such invariant measures the asymptotic radial stretch of a quasi-isometry.
\begin{definition}
For $[f]\in \QI(\mathbb{R}^n)$, define its \emph{asymptotic stretch} by
\[
s([f])
=
\lim_{R\to\infty}
\sup_{\|x\|\ge R}
\frac{\|f(x)\|}{\|x\|}.
\]
\end{definition}
\begin{proposition}
The map
$s:\QI(\mathbb{R}^n)\longrightarrow\mathbb{R}_{>0}$
is well defined and continuous with respect to the asymptotic topology. Moreover, it descends to a continuous map
\[
\bar s:\QI(\mathbb R^n)/H\longrightarrow\mathbb R_{>0},
\]
defined by
\[
\bar s([f]H)=s([f]).
\]
In particular, \(s([h])=1\) for every \([h]\in H\).
\end{proposition}
\begin{proof}
Suppose $[f]=[g]$. Then there exists $C>0$ such that
\[
\|f(x)-g(x)\|\le C
\qquad\text{for all }x\in\mathbb{R}^n.
\]
By the reverse triangle inequality,
\[
\left|
\frac{\|f(x)\|}{\|x\|}
-
\frac{\|g(x)\|}{\|x\|}
\right|
\le
\frac{C}{\|x\|}.
\]
Taking the supremum over $\|x\|\ge R$ and letting $R\to\infty$ shows that
$s([f])=s([g])$. Hence $s$ is well defined.

Now let $[f_k]\to[f]$ in the asymptotic topology. Since this topology is induced by the asymptotic pseudo-metric $d$, we have
\[
d([f_k],[f])\longrightarrow0.
\]
Again using the reverse triangle inequality,
\[
\bigl|\|f_k(x)\|-\|f(x)\|\bigr|
\le
\|f_k(x)-f(x)\|,
\]
and therefore
\[
|s([f_k])-s([f])|
\le
d([f_k],[f]).
\]
Thus, $s([f_k])\to s([f])$, proving continuity.

Finally, if $[h]\in H$, then
\[
\frac{\|h(x)-x\|}{\|x\|}\longrightarrow0,
\]
and therefore
\[
\frac{\|h(x)\|}{\|x\|}
\le
1+\frac{\|h(x)-x\|}{\|x\|}
\longrightarrow1,
\]
while the reverse triangle inequality gives
\[
\frac{\|h(x)\|}{\|x\|}
\ge
1-\frac{\|h(x)-x\|}{\|x\|}
\longrightarrow1.
\]
Hence, $s([h])=1$.

Now let $[g]=[f][h]$ with $[h]\in H$. Since $f$ is a quasi-isometry, there exist constants
$L\ge1$ and $A\ge0$ such that
\[
\|f(u)-f(v)\|
\le
L\|u-v\|+A
\qquad
(u,v\in\mathbb{R}^n).
\]
Taking $u=h(x)$ and $v=x$, we obtain
\[
\frac{\|g(x)-f(x)\|}{\|x\|}
=
\frac{\|f(h(x))-f(x)\|}{\|x\|}
\le
L\frac{\|h(x)-x\|}{\|x\|}
+\frac{A}{\|x\|}
\longrightarrow0.
\]
Hence,
\[
\left|
\frac{\|g(x)\|}{\|x\|}
-
\frac{\|f(x)\|}{\|x\|}
\right|
\le
\frac{\|g(x)-f(x)\|}{\|x\|}
\longrightarrow0,
\]
which implies $s([g])=s([f])$. Therefore $s$ is constant on the cosets of $H$.
\end{proof}

\section*{Acknowledgments}
 The authors thank Dr. Prateep Chakraborty for his valuable suggestions and fruitful comments. The first author acknowledges financial support from the Indian Institute of Science Education and Research Bhopal, India, and the second author acknowledges financial support from IIT Palakkad, India.
\bibliographystyle{plain}
\bibliography{bibfile}
\end{document}